 \newtheorem{thm}{Theorem}[subsection]
 \newtheorem{proposition}[thm]{Proposition}
 \theoremstyle{definition}
 \theoremstyle{remark}
 \newtheorem{rem}[thm]{Remark}
\begin{document}
\setcounter{page}{1}
%\begin{flushleft}
%{\scriptsize Appl. Comput. Math., V.xx, N.xx, 20xx, pp.xx-xx}
%\end{flushleft}
\bigskip
\bigskip
\title[R. D'Ambrosio \and C. Scalone: On the numerical solution of stochastic oscillators ] {On the numerical solution of stochastic oscillators driven by time-varying and random forces}
%\author[Appl. Comput. Math., V.xx, N.xx,  20xx]
\author[]{R. D'Ambrosio$^1$ \and C. Scalone$^1$
 }
\thanks{$^1$ Department of Engineering and Computer Science and Mathematics, University of L'Aquila \\
               Via Vetoio, Loc. Coppito - 67100 L'Aquila, Italy,
\\ \indent\,\,\,e-mail: raffaele.dambrosio@univaq.it, carmela.scalone@univaq.it
\\ \indent
  \em \,\,\,Manuscript received xx}

\begin{abstract}
 In this work, we provide a specific trigonometric stochastic numerical method for linear oscillators with high constant frequencies, driven by a nonlinear time-varying force and a random force. We present some theoretical considerations and numerical experiments on popular related physical models.\\
\bigskip

\noindent Keywords: Stochastic differential equations; stochastic oscillators; periodic time varying force; Filon's quadrature.
\\

\bigskip \noindent AMS Subject Classification: 65L07, 60H10, 60H35.

\end{abstract}
\maketitle

\smallskip
\section{Introduction}
Providing accurate numerical methods for discretizing mathematical models of oscillating phenomena without imposing severe stepsize restrictions is a very thorough topic. Scientific literature is extremely rich in models describing the dynamics of different types of oscillators, both in deterministic and stochastic setting. Because of the variety of models, an appropriate numerical treatment for any particular kind of oscillator is required. More specifically, many interesting examples of stochastic oscillator are obtained by the introduction in the equation of a deterministic oscillator of a \textit{noisy ingredient}, which can be, for example, an additive and/ or a multiplicative noise, a random  frequency, a random damping, and so on (see \cite{GittBook} and references therein for a survey).

In this paper, we focus on stochastic harmonic undamped oscillators driven by both a deterministic time-dependent force and a random Gaussian forcing.
This is modeled by a second order stochastic differential equation of the type:
\begin{equation}
\label{oscillator}
\ddot{x}=-\omega^2 x +g(t)+\varepsilon \xi(t)
\end{equation}
where $\xi(t)$ is a white noise process satisfying $\mathbb{E}\vert \xi(t)\xi(t') \vert =\delta(t-t')$ and $\omega$ is a positive real constant.

This kind of stochastic oscillators is very popular in physics literature, see \cite{GittBook,gitterman2013,lnp}, therefore a specific numerical treatment may be very useful for approximating their solutions. In the existing literature, there are several works devoted to investigate different aspects of stochastic oscillatory problems.
An analysis of long-term features of the linear oscillator 
$$\ddot{x}=-x +\varepsilon \xi(t)$$
(i.e. equation \eqref{oscillator} with $\omega=1$ and  $g\equiv 0$) is provided in \cite{smh04}.
In \cite{Cohen,CohenSigg}, the authors introduced a family of stochastic trigonometric numerical methods based on a variation of constant formula for nonlinear equations of the form
\begin{equation}
\label{oscill_cohen}
\ddot{x}=-\omega^2 x + g(x)+\varepsilon \xi(t)\end{equation}
where $g$ is a positional force and it is supposed to be potential.
The investigation of long-term properties of the equation, of the numerical approximation and their comparison, is typical in the study of stochastic oscillatory problems (see, for instance, \cite{BLL,bl09,citro,Cohen,CohenSigg,ds,cruz,senosian,vilmart} and references therein). 
Our aim is to join two ingredients, i.e. a variation of constant formula and specific quadrature rules to define a numerical treatment specific for problem \eqref{oscillator}.
The paper is organized as follows: in Section \ref{var_const}, we start from a semi-discretization based on a variation of constants formula; in Section \ref{filon} we remember the essential aspects of Filon's quadrature formulas. 
Section \ref{convergence} is dedicated to  theoretical convergence results and, finally, in Section \ref{tests} we test our method on famous physical models.

\section{Semidiscretization based on a variation of constant.}
\label{var_const}
Equation \eqref{oscillator} is equivalent to the following first order system of two equations in the variables $X_t$ (the position of the particle) and $V_t$ (its velocity):
\begin{equation}
\label{syst_XV}
\begin{cases}
dX_t = V_t dt\\
dV_t = -\omega^2 X_t dt+ g(t) dt+\varepsilon dW_t
\end{cases}
\end{equation}

Let us consider the matrix

$$R(t \omega)=\exp \left( \ t \begin{bmatrix} 0 & I \\
-\omega^2 & 0
\end{bmatrix} \right) = \begin{bmatrix}
\cos(t \omega)  &  \omega^{-1} \sin(t \omega)\\
- \omega \sin(t \omega)  &   \cos(t \omega)
\end{bmatrix}. $$

Then, the variation of constant formula applied to the system \eqref{syst_XV} is given by
\begin{equation}
\label{var_constant}
\begin{aligned}
\begin{pmatrix}
X_t \\
V_t
\end{pmatrix}= R(t \omega) \begin{pmatrix}
X_0 \\
V_0
\end{pmatrix} &+ \int_0^t  \begin{pmatrix}
\omega^{-1} \sin((t-s) \omega) \\ \cos( (t-s) \omega)
\end{pmatrix} g(s) ds \\
&+ \varepsilon \int_0^t  \begin{pmatrix}
\omega^{-1} \sin((t-s) \omega) \\ \cos( (t-s) \omega)
\end{pmatrix} dW_s.
\end{aligned}
\end{equation}
The discretization of the stochastic part leads to
\begin{equation}
\label{semidiscr}
\begin{aligned}
\begin{pmatrix}
X_{n+1} \\
V_{n+1} 
\end{pmatrix}= R(h \omega) \begin{pmatrix}
X_{n}  \\
V_{n} 
\end{pmatrix} &+ \int_{t_n}^{t_{n+1} } \begin{pmatrix}
\omega^{-1} \sin((t_{n+1} -s) \omega) \\ \cos( (t_{n+1} -s) \omega)
\end{pmatrix} g(s) ds \\
&+ \varepsilon  \begin{pmatrix}
\omega^{-1} \sin(h \omega) \\ \cos( h \omega)
\end{pmatrix} \Delta W_n,
\end{aligned}
\end{equation}
where $\Delta W_n= W(t_{n+1})-W(t_n)$ denotes the Wiener increment.
As nicely discussed in \cite{CohenSigg}, when $\omega \gg 1$, the problem is stiff, therefore numerical methods able to get good accuracy with step sizes $h$, whose product with $\omega$ do not need to be small.
This means that the error bounds in the position of the methods should be independent of the product $h \omega$.
The use of the variation of constant formula \eqref{var_constant} to design numerical schemes suited for such problems, is very used in the deterministic setting, see \cite{hlw,HockOster}.
The methods derived in \cite{Cohen} and \cite{CohenSigg} are the stochastic counterpart of the trigonometric methods.
Our problem \eqref{oscillator} differs from \eqref{oscill_cohen} because we have the deterministic force $g=g(t)$ acting on the system and we aim to construct accurate numerical procedures combining the variation of constant formula \eqref{var_constant} and appropriare quadrature rules.

\section{Filon's quadrature formula}
\label{filon}
In this section, we aim to remind few relevant issues on Filon quadrature formulas, see \cite{DavisRabinowitz,Filon}, which are perfectly suitable for our case. Filon formulas are specialized for integrals of the type
\begin{equation}
\label{integrale_filon}
\int_a^b \sin(k x) \psi(x) dx,
\end{equation}
where $k$ is a real constant. 
When $k$ is large, the rapid oscillation of $\sin(kx)$, the ordinary quadrature formulae require the division of the range of integration into a wide number subinterval.
We consider even number of nodes $2n$ in the interval $[a,b]$, so that
$$b=a+2nh$$
and we set
$$\theta=kh, \quad x_r=a+rh,\quad \psi_r=\psi(x_r).$$
Filon formula is then given by
\begin{equation}
\label{formula_filon}
\int_a^b \sin(k x) \psi(x) dx=h [ \alpha( \psi(a) \cos(k a)-\psi(b)\cos(k b)+ \beta S_{2r}+ \gamma S_{2r-1}],
\end{equation}
where
$$
\begin{aligned}
\alpha&= 1/\theta+ \cos \theta \sin \theta/ \theta^2-2 \sin^2(\theta)/\theta^3,\\
\beta&= 2[(1+cos^2\theta)/\theta^2-2 \cos \theta \sin \theta/ \theta^3,\\
\gamma&=4 (\sin \theta/\theta^3-\cos \theta/\theta^2),\\
S_{2r}&= 2 \sum_{r=0}^n \psi(x_{2r}) \sin(x_{2r})-\psi(a)\sin(k a)-\psi(b) \sin(k b),\\
 S_{2r+1}&= \sum_{r=1}^n \psi(x_{2r-1}) \sin(x_{2r-1}).
\end{aligned}
$$
For our purposes, it is appropriate to recall the following associated formula 
\begin{equation}
\int_a^b \sin(k x+ z) \psi(x) dx=h [ \alpha( \psi(a) \cos(k a+z)-\psi(b)\cos(k b+z)+ \beta S_{2r}+ \gamma S_{2r-1}]
\end{equation}
setting $z=\frac{\pi}{2}$, we get the formula for $\int_a^b \cos(k x) \psi(x) dx$.
Let $E_s, E_c$ designate respectively the error in Filon's sine and cosine formulas.
Of we require to choose $h$ in a way that $\theta <1$
this means that for large values of $k$, the choice of a small $h$ is mandatory.
However, if $f(x)$ can be well approximated by a piecewise quadratic function $p(x)$ using a coarse mesh, then we do not need for $\theta$ to be small. More precisely, if 
$$\max_{x \in [a,b]}\vert f(x) -p(x) \vert \leq \delta, $$
then, a uniform bound on the error in Filon's formula is given by $(b-a) \delta$.
When
$\theta <1$, set 
$$H(\theta)= \left\lvert \frac{\sin \theta}{3 \theta^2}+\frac{\sin \theta}{ \theta^3}-\frac{\cos \theta}{ \theta^4}\right\rvert $$ and $$M=\max_{a \leq x \leq b} \vert f^{(3)}(x) \vert,$$
then it has been shown in \cite{DavisRabinowitz} that
\begin{equation}
\label{filon_error}
E_s, E_c \leq H(\theta) M (b-a) h^3 +\mathcal{O}(h^4).
\end{equation}

\section{Convergence}
\label{convergence}
This section is dedicated to study the convergence of the introduced scheme.
We start recalling the following result form \cite{CohenSigg} concerning the the global mean-square error of the stochastic
trigonometric
 integrator  for the linear case.
\begin{proposition}
\label{prop1}
Consider the numerical solution of the linear problem 
$$ \ddot{x}= -\omega^2 x+ \varepsilon \dot{W}_t$$
given by 
\begin{equation}
\begin{pmatrix}
X_{n+1} \\
V_{n+1} 
\end{pmatrix}= R(h \omega) \begin{pmatrix}
X_{n}  \\
V_{n} 
\end{pmatrix} + \varepsilon  \begin{pmatrix}
\omega^{-1} \sin(h \omega) \\ \cos( h \omega)
\end{pmatrix} \Delta W_n.
\end{equation}
The mean-square errors of the above scheme, with a step size $h \leq h_0 $ (with a sufficiently small $h_0$ independent of $\varepsilon$) for
which $h/ \varepsilon \geq c_0 > 0$ holds,
satisfy
$$ \left(  \mathbb{E}\vert X_{t_n} - X_n \vert^2 \right) ^{1/2}\leq C \varepsilon\leq Ch,$$
$$ \left(  \mathbb{E}\vert V_{t_n} - V_n \vert^2 \right) ^{1/2}\leq C T^{1/2},$$
for $nh \leq T$.
The constant $C$ is independent of $\varepsilon$, $h$ and $n$.
\end{proposition}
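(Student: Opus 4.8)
The plan is to use the fact that the trigonometric integrator reproduces the flow of the homogeneous linear oscillator \emph{exactly}, so that the whole discretization error is carried by the stochastic convolution and the analysis reduces to an It\^o‑isometry estimate. First I would observe that, by the flow property of the linear SDE, the exact solution satisfies the \emph{same} one‑step recursion as the scheme,
\begin{equation*}
\begin{pmatrix} X_{t_{n+1}} \\ V_{t_{n+1}} \end{pmatrix}
= R(h\omega)\begin{pmatrix} X_{t_n} \\ V_{t_n} \end{pmatrix}
+ \varepsilon \int_{t_n}^{t_{n+1}} \begin{pmatrix} \omega^{-1}\sin((t_{n+1}-s)\omega) \\ \cos((t_{n+1}-s)\omega) \end{pmatrix} dW_s ,
\end{equation*}
the only difference being that the scheme freezes the integrand at $s=t_n$ and pulls it out against $\Delta W_n$. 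Subtracting, the global error $e_n=(X_{t_n}-X_n,\,V_{t_n}-V_n)^{\tp}$ obeys $e_{n+1}=R(h\omega)e_n+\delta_{n+1}$ with local error
\begin{equation*}
\delta_{n+1} = \varepsilon \int_{t_n}^{t_{n+1}} \begin{pmatrix} \omega^{-1}\big(\sin((t_{n+1}-s)\omega)-\sin(h\omega)\big) \\ \cos((t_{n+1}-s)\omega)-\cos(h\omega) \end{pmatrix} dW_s .
\end{equation*}
Since each $\delta_{n+1}$ is an It\^o integral over $[t_n,t_{n+1}]$ it is a martingale difference, so unrolling from $e_0=0$ gives $e_n=\sum_{k=1}^n R((n-k)h\omega)\delta_k$ with the summands pairwise orthogonal in $L^2$ for any fixed inner product on $\Real^2$.

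The key step I would use is to measure the error in the energy norm $\|(x,v)\|_\omega^2 := \omega^2 x^2 + v^2$, in which $R(t\omega)$ is an isometry (it is a rotation conjugated by $\mathrm{diag}(\omega,1)$). Then the cross terms drop out and
\begin{equation*}
\mathbb{E}\|e_n\|_\omega^2 = \sum_{k=1}^n \mathbb{E}\|\delta_k\|_\omega^2 ,
\end{equation*}
while the It\^o isometry together with $(\sin a-\sin b)^2+(\cos a-\cos b)^2 = 4\sin^2\!\big(\tfrac{a-b}{2}\big)$ gives, after the substitution $u=t_{n+1}-s$,
\begin{equation*}
\mathbb{E}\|\delta_k\|_\omega^2 = 4\varepsilon^2 \int_0^h \sin^2\!\Big(\tfrac{(u-h)\omega}{2}\Big)\,du \le 4\varepsilon^2 h .
\end{equation*}
Summing over $k\le n$ with $nh\le T$ yields $\mathbb{E}\|e_n\|_\omega^2\le 4\varepsilon^2 T$, hence $\omega^2\,\mathbb{E}|X_{t_n}-X_n|^2\le 4\varepsilon^2 T$ and $\mathbb{E}|V_{t_n}-V_n|^2\le 4\varepsilon^2 T$; taking square roots gives the position bound $\le 2\sqrt{T}\,\varepsilon/\omega\le C\varepsilon$, which becomes $\le Ch$ once $h/\varepsilon\ge c_0$ is used, and the velocity bound $\le 2\varepsilon\sqrt{T}\le C\sqrt{T}$ (the $\varepsilon$ being bounded in the regime considered), with constants independent of $\varepsilon$, $h$ and $n$. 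The hypothesis $h\le h_0$ plays no role here; it is inherited from the general nonlinear setting of \cite{CohenSigg}. A refinement using $\sin^2\theta\le\theta^2$ gives instead $\mathbb{E}\|\delta_k\|_\omega^2\le \varepsilon^2\omega^2 h^3/3$, which is sharper when $h\omega$ is small.

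I expect the only genuine subtlety to be the choice of norm. Carried out naively in the Euclidean norm one loses a factor $\|R(t\omega)\|\sim\omega$ at each step, which produces a spurious $\omega^2$ in the accumulated error and destroys the uniformity in $h\omega$ that is the whole point of the construction; the energy‑norm isometry is precisely what removes it, after which everything reduces to the elementary variance computation above.
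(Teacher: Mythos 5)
Your argument is correct, but note that the paper does not prove this proposition at all: it is explicitly recalled from the reference \cite{CohenSigg}, so there is no in-paper proof to compare against. What you give is a legitimate self-contained derivation, and it is essentially the standard route for the linear case: since the integrator uses the exact propagator $R(h\omega)$, the exact solution obeys the same one-step map and the global error telescopes into a sum of independent, mean-zero Wiener integrals $\sum_{k}R((n-k)h\omega)\delta_k$, after which the It\^o isometry and the identity $(\sin a-\sin b)^2+(\cos a-\cos b)^2=4\sin^2\bigl(\tfrac{a-b}{2}\bigr)$ give the variance of each local error. Your packaging of the $\omega$-uniformity through the energy norm $\omega^2x^2+v^2$ (in which $R(t\omega)$ is an isometry) is a clean way to organize what the reference, and the componentwise expansions used later in this paper's own Theorem, achieve implicitly by carrying the $\omega^{-1}$ weights on the position components and exploiting that the entries of $R$ are bounded by $\max(1,\omega^{-1})$ rather than by $\omega$; either bookkeeping avoids the spurious factor $\omega$ you warn about. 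Two small remarks: your bound actually sharpens the stated velocity estimate to $2\varepsilon\sqrt{T}$, which only reduces to the quoted $CT^{1/2}$ after using $\varepsilon\le h/c_0\le h_0/c_0$ (so the hypothesis $h\le h_0$ is not entirely idle — it is what lets you absorb $\varepsilon$ into the constant); and the position bound you obtain, $2\sqrt{T}\,\varepsilon/\omega$, is consistent with $C\varepsilon$ since the proposition only requires $C$ to be independent of $\varepsilon$, $h$ and $n$. No gaps.
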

Following the approach of \cite{CohenSigg} we prove the following result.
\begin{thm}
Let us suppose  the assumptions of Proposition \ref{prop1} satisfied. Let us suppose that on each subinterval $[t_{n-j-1},t_{n-j}]$ of the mesh, the function $g$ can be well approximated by a piecewise quadratic functions $p_j(x)$, i.e.
\begin{equation}
\max_{t \in [t_{n-j-1},t_{n-j}]} \vert g(t)-p_j(t) \vert \leq \delta_j.
\end{equation}
If $\delta := \max_j \delta_j$, we have that the global mean-square error in the position is given by
\begin{equation}
\mathbb{E}\vert X_n - X_{t_n} \vert^2 \leq  C h^2 +D \omega^{-2} h^4,
\end{equation}
where the constant $D$ depends only by  $T$ and $\delta$.
Instead, the global mean-square error in the velocity is estimated as
\begin{equation}
\mathbb{E}\vert V_n - V_{t_n} \vert^2 \leq ( C  + + \delta^2 h^2 ) T^2.
\end{equation}

\end{thm}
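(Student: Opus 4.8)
The plan is to exploit the linearity of \eqref{syst_XV} in order to split the global error into two pieces: the error produced by the time discretisation of the stochastic integral, which is already estimated in Proposition \ref{prop1}, and the error produced by replacing the exact integral of the deterministic force $g$ by its Filon approximation, which is a pathwise deterministic quantity. Write $Y_n=(X_n,V_n)^{\tp}$, $Y_{t_n}=(X_{t_n},V_{t_n})^{\tp}$, and $\Phi(\tau)=\bigl(\omega^{-1}\sin(\tau\omega),\,\cos(\tau\omega)\bigr)^{\tp}$. Restricting \eqref{var_constant} to $[t_n,t_{n+1}]$, the exact flow satisfies $Y_{t_{n+1}}=R(h\omega)Y_{t_n}+I_n+\varepsilon J_n$ with $I_n=\int_{t_n}^{t_{n+1}}\Phi(t_{n+1}-s)g(s)\,ds$ and $J_n=\int_{t_n}^{t_{n+1}}\Phi(t_{n+1}-s)\,dW_s$, whereas the scheme \eqref{semidiscr} reads $Y_{n+1}=R(h\omega)Y_n+\widehat I_n+\varepsilon\,\Phi(h)\,\Delta W_n$ with $\widehat I_n$ the Filon approximation of $I_n$.

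First I would subtract the two recursions and unroll them, obtaining the discrete variation-of-constants identity
\begin{equation*}
\mathcal{E}_n \;=\; \sum_{k=0}^{n-1}R\bigl((n-1-k)h\omega\bigr)\bigl(\widehat I_k-I_k\bigr)\;+\;\varepsilon\sum_{k=0}^{n-1}R\bigl((n-1-k)h\omega\bigr)\bigl(\Phi(h)\Delta W_k-J_k\bigr)\;=:\;\mathcal{E}_n^{q}+\mathcal{E}_n^{s},
\end{equation*}
where $\mathcal{E}_n=Y_n-Y_{t_n}$. The term $\mathcal{E}_n^{s}$ is, by construction, exactly the error recursion treated in Proposition \ref{prop1} (the case $g\equiv 0$), hence $\bigl(\mathbb{E}|\mathcal{E}_{n,X}^{s}|^2\bigr)^{1/2}\le C\varepsilon\le Ch$ and $\bigl(\mathbb{E}|\mathcal{E}_{n,V}^{s}|^2\bigr)^{1/2}\le CT^{1/2}$; moreover $\mathbb{E}\,\mathcal{E}_n^{s}=0$ since $\Delta W_k$ and the Itô integrals $J_k$ are centred. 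As $\mathcal{E}_n^{q}$ is deterministic, this gives the exact splitting $\mathbb{E}|\mathcal{E}_n|^2=\mathbb{E}|\mathcal{E}_n^{s}|^2+|\mathcal{E}_n^{q}|^2$ componentwise, so only $\mathcal{E}_n^{q}$ remains to be controlled.

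For the local defect $d_k=\widehat I_k-I_k$ I would observe that the two components of $\Phi(t_{k+1}-\cdot)\,g(\cdot)$ are of the Filon type $\sin(\omega s+z)g(s)$, $\cos(\omega s+z)g(s)$ treated in Section \ref{filon}, so that the estimate \eqref{filon_error}, combined with the hypothesis $\max_{[t_k,t_{k+1}]}|g-p_k|\le\delta_k\le\delta$ (which makes Filon exact up to the size of $g-p_k$, by the uniform $(b-a)\delta$ bound recalled there), yields $|d_k^{X}|\le\omega^{-1}\rho_k$ and $|d_k^{V}|\le\rho_k$, the extra $\omega^{-1}$ in the first entry being the prefactor $\omega^{-1}\sin$ in $\Phi$ and $\rho_k$ the per-panel Filon bound written through $\delta_k$ and powers of $h$. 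Recalling that $R(mh\omega)$ has entries $\cos(mh\omega)$, $\omega^{-1}\sin(mh\omega)$, $-\omega\sin(mh\omega)$, $\cos(mh\omega)$, one sees that every term of $\mathcal{E}_{n,X}^{q}=\sum_k\bigl(\cos((n-1-k)h\omega)d_k^{X}+\omega^{-1}\sin((n-1-k)h\omega)d_k^{V}\bigr)$ is $O(\omega^{-1}\rho_k)$, so $|\mathcal{E}_{n,X}^{q}|\le C\omega^{-1}\sum_k\rho_k$; in the velocity row the factor $\omega$ of the $(2,1)$ entry is cancelled by the $\omega^{-1}$ carried by $d_k^{X}$, giving $|\mathcal{E}_{n,V}^{q}|\le C\sum_k\rho_k$. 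Summing the per-panel bounds with $n\le T/h$ then produces a correction of the form $D\omega^{-2}h^4$ in the position and of the form $\delta^2 h^2 T^2$ in the velocity, with $D=D(T,\delta)$, and inserting these into $\mathbb{E}|\mathcal{E}_n|^2=\mathbb{E}|\mathcal{E}_n^{s}|^2+|\mathcal{E}_n^{q}|^2$ together with Proposition \ref{prop1} gives the two asserted estimates.

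The hard part is the accumulation step for $\mathcal{E}_n^{q}$: since $\|R(h\omega)\|$ can be as large as $\omega$, one must not bound the convolution $\sum_k R((n-1-k)h\omega)d_k$ by submultiplicativity, as that reintroduces a spurious dependence on the product $h\omega$ and defeats the purpose of the scheme; instead $R(h\omega)$ has to be kept in explicit trigonometric form so that the factor $\omega^{-1}$ standing in front of every $\sin$ — both in $\Phi$ and in the $(1,2)$ entry of $R$ — is seen to survive the summation in the position component, while in the velocity component the $\omega$ of the $(2,1)$ entry is exactly balanced by the $\omega^{-1}$ of $d_k^{X}$; this is what makes the position error constant independent of $h\omega$. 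A secondary and more routine point is the Filon bookkeeping: fixing the phase $z$ so that \eqref{filon_error} applies entry-wise, and merging the generic $h^3$-type bound with the sharper $(b-a)\delta_k$ bound coming from the piecewise-quadratic approximation of $g$.
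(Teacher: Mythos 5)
Your proposal follows essentially the same route as the paper's proof: unroll the one-step recursion, split the global error into the deterministic Filon-quadrature defect and the stochastic discretisation error, invoke Proposition \ref{prop1} for the latter, and bound the former panel by panel via the $(b-a)\delta_j$ Filon estimate while keeping $R$ in explicit trigonometric form so that the $\omega^{-1}$ prefactors survive into the position component. The only (minor) divergence is that you use the zero mean of the stochastic part to obtain the exact orthogonal splitting $\mathbb{E}\vert\mathcal{E}_n\vert^2=\mathbb{E}\vert\mathcal{E}_n^{s}\vert^2+\vert\mathcal{E}_n^{q}\vert^2$ where the paper settles for the factor-two bound $\mathbb{E}\vert a+b\vert^2\le 2\,\mathbb{E}\vert a\vert^2+2\,\mathbb{E}\vert b\vert^2$, and both arguments in fact land on a quadrature contribution of size $\omega^{-2}\delta^2T^2$ (times the stochastic prefactor) rather than a literally derived $D\omega^{-2}h^4$, so your write-up is neither more nor less complete than the paper's at that final step.
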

\begin{proof}
Let us write the numerical approximation as 
$$ X_n=  \cos(n h \omega)X_0+\omega^{-1} \sin (n h \omega) V_0 +d_1+s_1,$$
$$V_n= - \omega \sin(n h \omega) X_0+ \cos (n h \omega) V_0+ d_2+s_2, $$
with
$$\begin{aligned}
d_1&=\omega^{-1}\varepsilon  \sum_{j=0}^{n-1} \Bigg(  \cos(j h \omega) \mathcal{Q}^S_F(t_{n-j-1},t_{n-j},\omega)
 +\sin(j h \omega) \mathcal{Q}^C_F(t_{n-j-1},t_{n-j}, \omega) \Bigg),\\
s_1&=\omega^{-1}\varepsilon  \sum_{j=0}^{n-1}\left( \cos(j h \omega) \sin(j h \omega)+\sin( h \omega) \cos(j h \omega)\right)  \Delta W_{n-j-1} ,\\
d_2&=  \varepsilon  \sum_{j=0}^{n-1}\Bigg(  \sin(j h \omega) \mathcal{Q}^S_F(t_{n-j-1},t_{n-j}, \omega) -\cos(j h \omega) \mathcal{Q}^C_F(t_{n-j-1},t_{n-j}, \omega)\Bigg),\\
s_2&=\varepsilon  \sum_{j=0}^{n-1}\left( \sin(j h \omega) \sin( h \omega)-\cos( h \omega) \cos(j h \omega)\right)  \Delta W_{n-j-1}.
\end{aligned}$$
Let us also write the exact solution as 
$$ X_{t_n}=  \cos(n h \omega)X_0+\omega^{-1} \sin (n h \omega) V_0 +\hat{d}_1+\hat{s}_1,$$
$$V_{t_n}= - \omega \sin(n h \omega) X_0+ \cos (n h \omega) V_0+ \hat{d}_2+\hat{s}_2,$$
where \
$$\begin{aligned}
\hat{d}_1&=  \omega^{-1}\varepsilon  \sum_{j=0}^{n-1} \Bigg(  \cos(j h \omega) \int_{t_{n-j-1}}^{t_{n-j}}\sin( (t_{n-j}-s) \omega)g(s) ds\\
&\qquad\qquad\qquad+\sin(j h \omega) \int_{t_{n-j-1}}^{t_{n-j}}\cos( (t_{n-j}-s) \omega)g(s) ds    \Bigg),\\
\hat{s}_1 &= \omega^{-1}\varepsilon  \sum_{j=0}^{n-1}\Bigg(  \cos(j h \omega) \int_{t_{n-j-1}}^{t_{n-j}}\sin( (t_{n-j}-s) \omega) dW_s \\
&\qquad\qquad\qquad+\sin(j h \omega) \int_{t_{n-j-1}}^{t_{n-j}}\cos( (t_{n-j}-s) \omega) dW_s \Bigg),\\
\hat{d}_2&= \varepsilon  \sum_{j=0}^{n-1}\Bigg(  \sin(j h \omega) \int_{t_{n-j-1}}^{t_{n-j}}\sin( (t_{n-j}-s) \omega)g(s) ds\\
&\qquad\qquad\qquad-\cos(j h \omega) \int_{t_{n-j-1}}^{t_{n-j}}\cos( (t_{n-j}-s) \omega)g(s) ds \Bigg), \\
\hat{s}_2 &= \varepsilon  \sum_{j=0}^{n-1}\Bigg(  \sin(j h \omega) \int_{t_{n-j-1}}^{t_{n-j}}\sin( (t_{n-j}-s) \omega) dW_s\\
&\qquad\qquad\qquad+\cos( j h \omega) \int_{t_{n-j-1}}^{t_{n-j}}\cos( (t_{n-j}-s) \omega) dW_s \Bigg). 
\end{aligned}$$

The values $\mathcal{Q}^S_F(t_{n-j-1},t_{n-j}, \omega)$ and $\mathcal{Q}^C_F(t_{n-j-1},t_{n-j}, \omega)$ respectively denote the Filon's approximation of the integrals $\int_{t_{n-j-1}}^{t_{n-j}}\sin( (t_{n-j}-s) \omega)g(s) ds$    and $\int_{t_{n-j-1}}^{t_{n-j}}\cos( (t_{n-j}-s) \omega)g(s) ds$.   

The global mean-square error in the position is given by
\begin{equation*}
\mathbb{E}\vert X_n - X_{t_n} \vert^2= \mathbb{E}\vert d_1 + s_1 - \hat{d}_1-\hat{s}_1 \vert^2 \leq 2 \mathbb{E}\vert d_1-\hat{d}_1 \vert^2 +2 \mathbb{E}\vert s_1-\hat{s}_1 \vert^2.
\end{equation*}
The term $\mathbb{E}\vert d_1-\hat{d}_1 \vert^2$ is the global mean-square error in the linear case (see Prop. \ref{prop1} and \cite{CohenSigg}).
Since
\begin{align*}
\vert d_1-\hat{d}_1 \vert &\leq   \omega^{-1}\varepsilon  \sum_{j=0}^{n-1} \lvert  \mathcal{E}^S_F(t_{n-j-1},t_{n-j},\omega)
 +\mathcal{E}^C_F(t_{n-j-1},t_{n-j}, \omega) \rvert   \\
 & \leq\omega^{-1}\varepsilon  \sum_{j=0}^{n-1} 2 \delta_j h \leq
    2 \omega^{-1}\varepsilon  \delta n h = 2 \omega^{-1}\varepsilon  \delta T,
\end{align*}
then
\begin{equation*}
\mathbb{E}\vert X_n - X_{t_n} \vert^2 \leq 4 C \varepsilon^2 + 8 \omega^{-2}\delta ^2 T^2 \varepsilon^2.
\end{equation*}

The global mean-square error in the velocity is given by
\begin{equation*}
\mathbb{E}\vert V_n - V_{t_n} \vert^2= \mathbb{E}\vert d_2 + s_2 - \hat{d}_2-\hat{s}_2 \vert^2 \leq 2 \mathbb{E}\vert d_2-\hat{d}_2 \vert^2 +2 \mathbb{E}\vert s_2-\hat{s}_2 \vert^2.
\end{equation*}
The term $\mathbb{E}\vert s_2-\hat{s}_2 \vert^2$ is the global mean-square error in the linear case (see Prop. \ref{prop1} and \cite{CohenSigg}) in the velocity. 

Since, as before, 
$$ \vert \hat{d}_2-\hat{s}_2 \vert \leq T \delta \varepsilon,$$
finally we get 
\begin{equation*}
\mathbb{E}\vert V_n - V_{t_n} \vert^2 \leq C T^2 \delta^2 + \varepsilon^2 T^2.
\end{equation*}
\qed
\end{proof}

\begin{rem}
In the case in which the ``piecewise quadratic" hypothesis on $g$ is not satisfied, so we have to choose $h$ in a way that $\theta <1$, we can use the bounds \eqref{filon_error} to  give an estimate of the mean square-errors. Since
\begin{align*}
\vert d_1-\hat{d}_1 \vert \leq &  \omega^{-1}\varepsilon  \sum_{j=0}^{n-1} \lvert  \mathcal{E}^S_F(t_{n-j-1},t_{n-j},\omega)
 +\mathcal{E}^C_F(t_{n-j-1},t_{n-j}, \omega) \rvert  \\
 &\leq \omega^{-1}\varepsilon  \sum_{j=0}^{n-1} 2 H(\theta) M h (h/r)^3 \leq 2 \omega^{-1}\varepsilon  n h  H(\theta) M (h/r)^3,
\end{align*}
we have
\begin{equation*}
\mathbb{E}\vert X_n - X_{t_n} \vert^2 \leq 4 C \varepsilon^2 + 8 \omega^{-2} T^2 H^2(\theta) M^2  h_1^6 \varepsilon^2,
\end{equation*}
where $h_1=h/r$ in the quadrature formula.
\end{rem}

\section{Numerical Experiments}
\label{tests}
We propose an example very popular in physics literature (see \cite{Bulsara,GittBook,gitterman2013})
\begin{equation}
\label{cosine_example}
\ddot{x}=-\omega^2 x -Q \cos(\Omega t) +\varepsilon \xi(t).
\end{equation}
We integrate \eqref{cosine_example} over the interval $[0,1]$, with initial conditions $X_0=0.8$ and $V_0=1$.
We employ the Filon's formula choosing five nodes. The values of the constants are $Q=5$, $\Omega=20$ and $\varepsilon=0.3$. Following the approach in \cite{H2001}, Figure \ref{b1000_eps03} provides the plot of the strong errors  of the numerical solutions obtained by \eqref{semidiscr}, discretizing the integrals by Filon, Lobatto (also with five points) and the trapezoidal quadrature formulas, at the endpoint $T=1$. More precisely, in Figure \ref{b1000_eps03}, we have set $\omega=10$ (top) and $\omega=50$ (bottom). In Figure \ref{b10000_eps03}, we plot the results for  $\omega=100$. These figures show that Filon method outperforms the other ones. 

\begin{figure}
\begin{tabular}{c}
         \includegraphics[width=\textwidth]{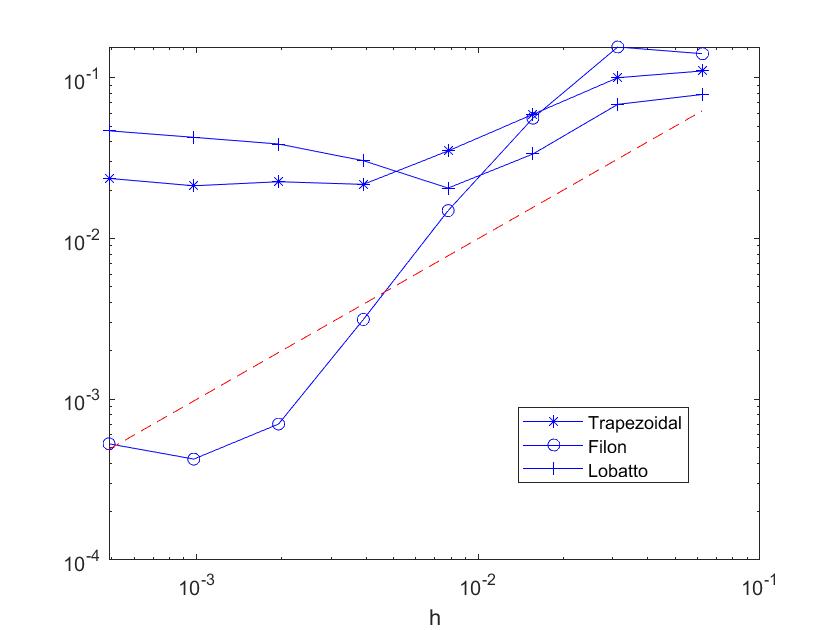}\\
         \includegraphics[width=\textwidth]{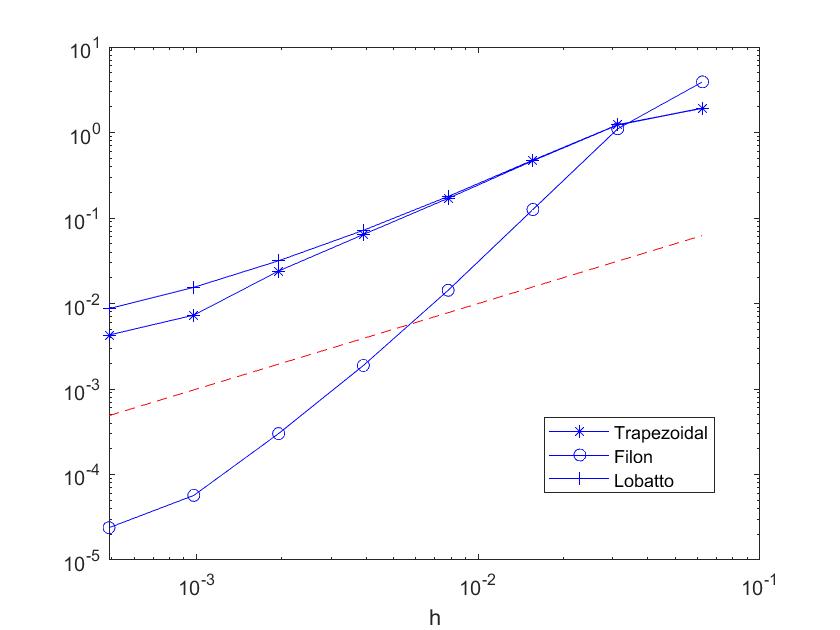}
\end{tabular}
            \caption{Strong errors at $T=1$ of the numerical solutions. In the top figure we set $\omega=10$, while in the bottom figure on the left $\omega=50$.}
        \label{b1000_eps03}
\end{figure}

\begin{figure}[h]
\centering
\includegraphics[width=\textwidth]{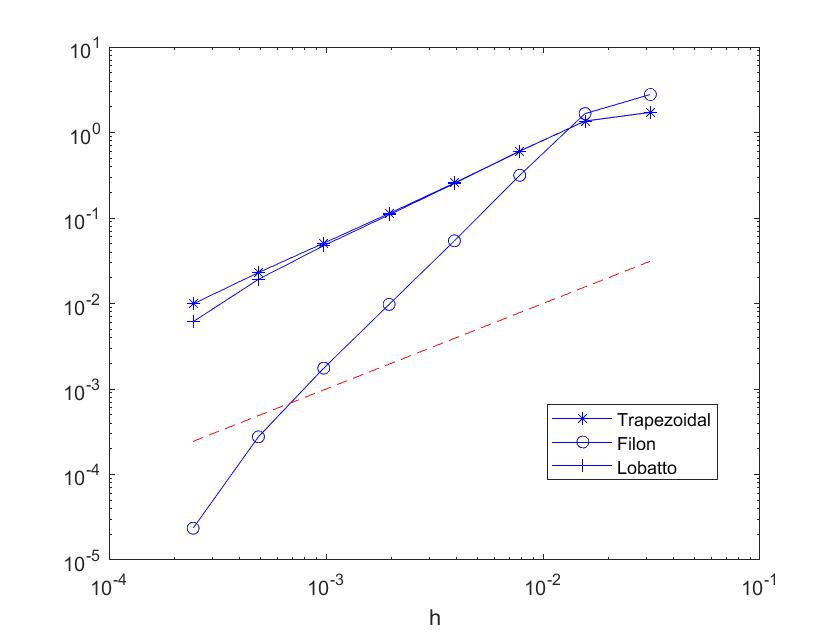}
\caption{Strong errors  of the numerical solutions at $T=1$ for  $\omega=100$.} \label{b10000_eps03}
\end{figure}

As second example, we propose the problem
\begin{equation}
\label{cosinesine_example}
\ddot{x}=-\omega^2 x -Q \cos(\Omega t)-R\sin(\Theta t) +\varepsilon \xi(t),
\end{equation}
with $\varepsilon=0.3$,  $Q=3$, $\Omega=30$, $R=2$ and $\Theta=25$.
We integrate \eqref{cosine_example} over the interval $[0,1]$, with initial conditions $X_0=0.8$ and $V_0=1$.  Figure \ref{sc_eps03} shows the strong errors in $T=1$, setting $\omega=50$ (top) and $\omega=150$ (bottom), of the three methods, corresponding to the three different quadrature to fully discretize \eqref{semidiscr}. Also in this case Filon method outperforms the other ones. 

\begin{figure}
\begin{tabular}{c}
         \includegraphics[width=\textwidth]{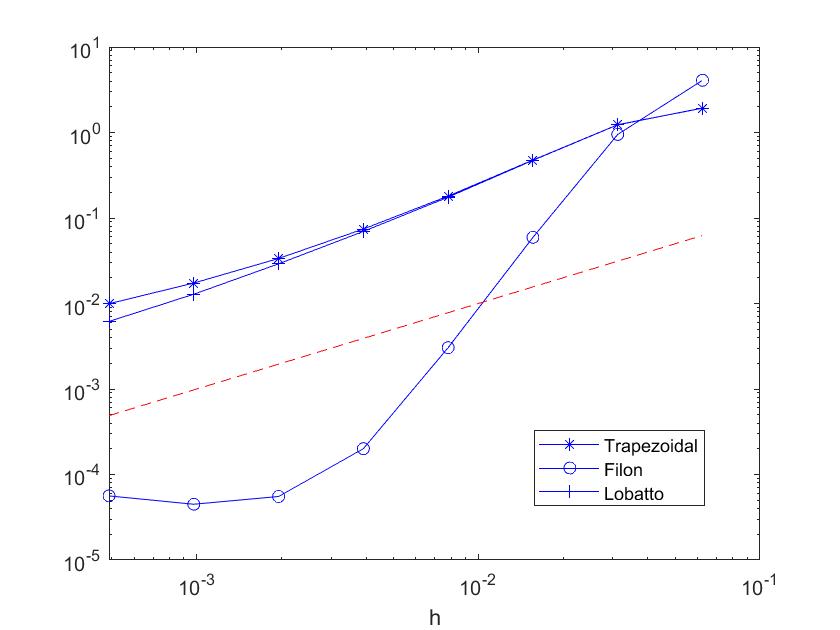}\\
         \includegraphics[width=\textwidth]{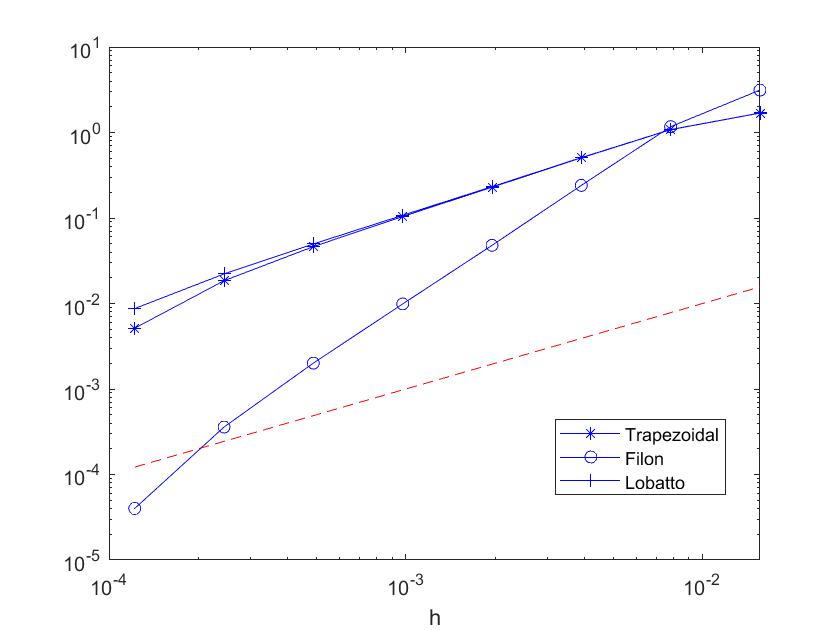}
\end{tabular}
            \caption{Strong errors in the position for $\omega=50 $ (top) and in for $\omega=150$ (bottom), related to problem \ref{cosinesine_example}.}
        \label{sc_eps03}
\end{figure}

\section{Conclusions}
This work is dedicated to present a specific numerical strategy for stochastic oscillators of type \eqref{oscillator}. Numerical experiments show that our approach give the best generalized performance against a full discretization of \eqref{semidiscr} via Lobatto or Trapezoidal rule. In particular, the Trapezoidal option may be seen as the analogue of the method of \cite{CohenSigg} for the case in which $g=g(t)$. Moreover, in many examples of oscillators we can find the co-presence of a forcing term $g=g(t)$ and a non linearity with respect to $x$. In these cases, it is possible to join the approaches to accurately simulate the solutions. Our general idea is to correctly adapt the numerical strategy to the problem; in particular, models like \eqref{oscillator} with time-varying frequencies and/or presence of damping may be objects of future works.
\bigskip

\noindent{\\ \bf Acknowledgments}\\
This work is supported by GNCS-INDAM project and by PRIN2017-MIUR project. The authors are member of the INDAM Research group GNCS. The authors are thankful to Dr. Monia Capanna for their helpful discussions.

\bigskip

%\bibliography{mybibfile}

\end{document}